\newtheorem{thm}{Theorem}[section]
\newtheorem{pro}[thm]{Proposition}
\newtheorem{lem}[thm]{Lemma}
\newtheorem{cor}[thm]{Corollary}
\newtheorem{exa}[thm]{Example}
\DeclareMathOperator*{\Psl}{PSL}
\DeclareMathOperator*{\Sz}{Sz}
\date{}
\title{A nilpotency criterion for some verbal subgroups}
\author{\textsc{Carmine Monetta} \;and \textsc{Antonio Tortora}\footnote{The authors are members of {\em National Group for Algebraic and Geometric Structures, and their Applications} (GNSAGA--INdAM).}\\
\small{Dipartimento di Matematica, Universit\`a di Salerno}\\
\small{Via Giovanni Paolo II, 132 - 84084 - Fisciano (SA), Italy}\\
\small{E-mail: cmonetta@unisa.it, antortora@unisa.it}}
\begin{document}
\maketitle

\begin{abstract} 
The word $w=[x_{i_1},x_{i_2},\dots,x_{i_k}]$ is a simple commutator word if $k\geq 2, i_1\neq i_2$ and $i_j\in \{1,\dots,m\}$, for some $m>1$. For a finite group $G$, we prove that if $i_{1} \neq i_j$ for every $j\neq 1$, then the verbal subgroup corresponding to $w$ is nilpotent if and only if  $|ab|=|a||b|$ for any $w$-values $a,b\in G$ of coprime orders. We also extend the result to a residually finite group $G$, provided that the set of all $w$-values in $G$ is finite.\\

\noindent{\bf 2010 Mathematics Subject Classification:} 20F18, 20F45\\
{\bf Keywords:} nilpotent, Engel word, verbal subgroup 
\end{abstract}

\section{Introduction}

Let $F$ be the free group on free generators $x_1,\dots,x_m$, for some $m>1$. A group-word is any nontrivial element of $F$, that is, a product of finitely many $x_i$'s and their inverses. The elements of the commutator subgroup of $F$ are called commutator words. We say that the commutator word 
$$[x_{i_1},x_{i_2},\dots,x_{i_k}]=[\dots[[x_{i_1},x_{i_2}],x_{i_3}],\dots,x_{i_k}]$$ 
is a {\em simple commutator word} if $k\geq 2, i_1\neq i_2$ and $i_j\in \{1,\dots,m\}$ for every $j\in \{1,\dots,k\}$. Examples of simple commutator words are the lower central words and the $n$-Engel word 
$$[x,\,_{n} y]=[x, \underbrace{y,\dots,y}_{n}].$$ 

Let $w=w(x_1,\dots,x_k)$ be a group-word in the variables $x_1,\dots, x_k$. For any group $G$  and arbitrary $g_1,\dots, g_k\in G$, the elements of the form $w(g_1,\dots, g_k)$ are called the $w$-values in $G$. We denote by $G_w$ the set of all $w$-values in $G$. The verbal subgroup of $G$ corresponding to $w$ is the (normal) subgroup $w(G)$ of $G$ generated by $G_w$. If $w(G)=1$, then $w$ is said to be a law in $G$. 

Recently the following question has been considered (\cite{BS}, see also \cite{baubau}): 

\noindent {\em Let $w$ be a commutator word and let $G$ be a finite group with the property that $|ab|=|a||b|$ for any $a,b\in G_w$ of coprime orders. Is the verbal subgroup $w(G)$ nilpotent?}
Here $|x|$ stands for the order of the element $x\in G$. 

As remarked in \cite{BS}, by a result of Kassabov and Nikolov \cite{kani} this is not true in general (see Example \ref{v10}). Two easier counterexamples are given in Section \ref{Examples}. On the other hand, the answer to the above question is positive when $w$ is a lower central word \cite{BS,bms}. Motivated by this, in the present paper we prove the following nilpotency criterion for $w(G)$, where $w$ is a simple commutator word without any repetition of the first variable.

\begin{thm}\label{main}	
Let $w=[x_{i_1}, \ldots, x_{i_k}]$ be a simple commutator word with $i_{1} \neq i_j$ for every $j \in \{2, \ldots, k\}$, and let $G$ be a finite group. Then $w(G)$ is nilpotent if and only if $|ab|=|a||b|$ for any $a,b\in G_w$ of coprime orders. 
\end{thm}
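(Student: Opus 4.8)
The plan is the following. The implication ``only if'' is routine: a finite nilpotent group is the direct product of its Sylow subgroups, so any two of its elements of coprime orders lie in complementary direct factors, hence commute, whence $|ab|=|a||b|$ (as $|a|,|b|$ are coprime); since $G_w\subseteq w(G)$, this holds in particular for all $w$-values of coprime orders. The substance of the theorem is the converse.

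So assume $|ab|=|a||b|$ for all $a,b\in G_w$ of coprime orders; we must prove that $w(G)$ is nilpotent, equivalently --- as $w(G)\trianglelefteq G$ --- that $w(G)\le F(G)$, the Fitting subgroup of $G$. The feature I would build on is that $x_{i_1}$ occurs exactly once in $w$: for any fixed values $g_{i_2},\dots,g_{i_k}$ of the remaining variables, the set $\{[g,g_{i_2},\dots,g_{i_k}]:g\in G\}$ generates the subgroup $[G,g_{i_2},\dots,g_{i_k}]$, and every generator of this subgroup --- as well as every $G$-conjugate of it --- is again a $w$-value, because conjugation preserves the coincidence pattern among the $x_{i_j}$'s. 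Thus $w(G)$ is the join of the subgroups $[G,g_{i_2},\dots,g_{i_k}]$ and, most importantly, within each of them we are free to put an arbitrary element of $G$ in the first coordinate of $w$.

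To reach $w(G)\le F(G)$, I would prove that every $w$-value $a$ belongs to $F(G)$; then $w(G)=\langle G_w\rangle\le F(G)$ is nilpotent. By a theorem of Baer, it is enough to check that $\langle a,a^{g}\rangle$ is nilpotent for every $g\in G$, and here $a^{g}$ is again a $w$-value (conjugation commutes with applying $w$), of the same order as $a$. Suppose this fails for some $a$ and $g$. Then a subgroup that occurs --- $\langle a,a^{g}\rangle$, or, from a more global point of view, $w(G)$ itself --- contains a minimal non-nilpotent subgroup, that is, a Schmidt group $S=P\rtimes\langle y\rangle$ with $P$ a normal Sylow $p$-subgroup, $y$ a $q$-element and $p\ne q$. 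Against this configuration I would construct two genuine $w$-values of $G$, one a $p$-element and one a $q$-element, whose product has order strictly less than the product of their orders, contradicting the hypothesis. The single occurrence of $x_{i_1}$ is exactly what enables this: it lets us substitute into the first variable a preimage of an element of $P/\Phi(P)$ that is moved by $y$, while choosing the other variables so that the resulting element is a $w$-value lying in a conjugate of $S$; a companion $w$-value is produced from $y$.

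I expect this last step to be the main obstacle. One complication is that the hypothesis concerns $w$-values only and is not visibly inherited by proper quotients of $G$, so a plain induction on $|G|$ is not available; one must work inside $G$, passing only to sections that still exhibit enough $w$-values, and it is here that coprime-action arguments together with the Baer--Suzuki theorem are used to force the relevant $p$-elements into $O_p$. A further complication is that one may first have to rule out non-soluble sections, which should follow from standard facts about element orders in finite simple groups. Throughout, the assumption $i_1\neq i_j$ for every $j\ge 2$ is indispensable: without it the criterion fails, as Example~\ref{v10} and the examples of Section~\ref{Examples} show.
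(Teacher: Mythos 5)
Your plan correctly isolates the two features that make the theorem work --- the single occurrence of $x_{i_1}$, which lets one substitute an arbitrary element of $G$ into the first slot, and the need for coprime-action arguments plus Baer--Suzuki --- but as written it has a genuine gap at its centre: you never say how the hypothesis $|ab|=|a||b|$ is actually brought to bear. You announce that you would ``construct two genuine $w$-values, one a $p$-element and one a $q$-element, whose product has order strictly less than the product of their orders'', but give no mechanism for controlling the order of that product. The paper's entire proof rests on one computation (Lemma \ref{bbb}): if $x\in G_w$ has $p'$-order and normalizes a $p$-subgroup $P$, then coprime action gives $[P,x^{-1}]=[P,\,_{k-1}x^{-1}]$, each $a=[g,\,_{k-1}x^{-1}]$ with $g\in P$ is a $w$-value (this is where $i_1\neq i_j$ is used), and
\[
a\,x=[g,\,_{k-2}x^{-1}]^{-1}\,x\,[g,\,_{k-2}x^{-1}]
\]
is a \emph{conjugate of $x$}, so $|ax|=|x|$, while the hypothesis forces $|ax|=|a||x|$; hence $a=1$ and $[P,x]=1$. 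Without this (or an equivalent) identity the order hypothesis is never used, and your sketch does not supply a substitute.

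The specific route you propose also founders on a point you partly anticipate but do not resolve: in a Schmidt subgroup $S=P\rtimes\langle y\rangle$ of $w(G)$ there is no reason for $y$, or any power or conjugate of $y$, to be a $w$-value, and the hypothesis says nothing about elements that are not $w$-values; ``a companion $w$-value is produced from $y$'' is precisely the step that needs an argument and is the hardest part of the problem. The paper avoids Schmidt subgroups altogether: in the soluble case it bounds the Fitting height by $2$ using Turull's tower criterion (Lemma \ref{Turull}), where the condition $[P_i,P_{i-1}]=P_i$ combined with $[P_2,x]=[P_2,\,_{k-1}x]$ guarantees that the acting subgroup is generated by honest $w$-values of prime-power order, and then finishes with Lemma \ref{meta}; in the insoluble case it needs the Frobenius normal $p$-complement theorem (Lemma \ref{generators}), Thompson's classification of minimal simple groups (Proposition \ref{msg}) and Baer--Suzuki to manufacture a $w$-value that is a $2$-element modulo the centre. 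Your remark that the insoluble case ``should follow from standard facts about element orders in finite simple groups'' underestimates this: since the hypothesis constrains only $w$-values, one must first locate $w$-values of the right kind inside a minimal simple section, which is exactly what those three ingredients accomplish.
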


We also extend Theorem \ref{main} to a residually finite group $G$, provided that the set of all $w$-values in $G$ is finite. 

\begin{cor}\label{res} \label{residual}
Let $w = [x_{i_1},\ldots, x_{i_k}]$ be a simple commutator word with $i_{1} \neq i_j$ for every $j \in \{2, \ldots, k\}$, and let $G$ be a residually finite group in which $G_w$ is finite. Then $w(G)$ is finite, and it is nilpotent if and only if $|ab| = |a||b|$ for any $a, b \in G_w$ of coprime orders.
\end{cor}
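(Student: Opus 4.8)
The plan is to derive Corollary~\ref{residual} from Theorem~\ref{main} by passing to a suitable finite quotient of $G$, once it is known that $w(G)$ is finite.

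First I would establish that $w(G)$ is finite. Since $w(G)=\langle G_w\rangle$ with $G_w$ finite, $w(G)$ is finitely generated. As $G_w$ is closed under conjugation, $G$ acts on the finite set $G_w$ with kernel $C_G(G_w)$ of finite index; this kernel centralises a generating set of $w(G)$, so $w(G)\cap C_G(G_w)$ is a central subgroup of finite index in $w(G)$. By Schur's theorem $w(G)'$ is finite, hence $w(G)/w(G)'$ is a finitely generated abelian group; it then remains to see that $w(G)/w(G)'$ is a torsion group — equivalently, since $w(G)'$ is finite, that $w(G)$ itself is finite. This last point is a conciseness phenomenon for the word $w$ in the class of residually finite groups, and I would quote it from the literature (see, e.g., \cite{bms}); for the words considered here it is exactly what is needed. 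Thus $w(G)$ is finite.

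Next I would carry out the reduction to the finite case. Because $w(G)$ is finite and $G$ is residually finite, for each of the finitely many nontrivial elements of $w(G)$ there is a normal subgroup of finite index in $G$ avoiding it; intersecting these finitely many subgroups yields a normal subgroup $N$ of finite index in $G$ with $N\cap w(G)=1$. Put $\bar G:=G/N$ and let $q\colon G\to\bar G$ be the canonical projection, so $\bar G$ is finite. Since $q$ is surjective, the set of $w$-values of $\bar G$ is $q(G_w)$, whence $w(\bar G)=\langle q(G_w)\rangle=q(w(G))$, while $q$ restricted to $w(G)$ is injective, as $\ker q\cap w(G)=N\cap w(G)=1$. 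Consequently $q$ induces an isomorphism $w(G)\xrightarrow{\sim}w(\bar G)$ carrying $G_w$ onto $(\bar G)_w$. In particular $w(G)$ is nilpotent if and only if $w(\bar G)$ is nilpotent; and since for $a,b\in G_w$ the elements $a$, $b$, $ab$ all lie in $w(G)$, on which $q$ is order-preserving, the equality $|ab|=|a||b|$ (and the coprimality $\gcd(|a|,|b|)=1$) holds exactly when the corresponding relation holds for $q(a),q(b)\in(\bar G)_w$, every pair in $(\bar G)_w$ arising in this way. Applying Theorem~\ref{main} to the finite group $\bar G$ now gives the chain of equivalences: $w(G)$ is nilpotent $\iff$ $w(\bar G)$ is nilpotent $\iff$ $|xy|=|x||y|$ for all $x,y\in(\bar G)_w$ of coprime orders $\iff$ $|ab|=|a||b|$ for all $a,b\in G_w$ of coprime orders. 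This proves the corollary.

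I expect the finiteness of $w(G)$ to be the only genuine obstacle: it is a conciseness statement (conciseness can fail for words in arbitrary groups), and the Schur-theorem argument above reduces it only to the assertion that individual $w$-values have finite order, which still requires the residual-finiteness hypothesis. Granted that input, the passage to $\bar G$ is routine; the single point needing care is that this passage neither enlarges $w(G)$ nor alters any of the element orders entering the hypothesis, and this is precisely why $N$ is chosen to meet $w(G)$ trivially.
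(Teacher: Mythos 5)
Your second step --- choosing a normal subgroup $N$ of finite index with $N\cap w(G)=1$, observing that $G/N$ inherits the hypotheses of Theorem~\ref{main} with $w(G/N)\simeq w(G)$, and concluding --- is exactly the paper's argument, and your verification that the passage to $G/N$ preserves the relevant element orders is correct.

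The gap is in the first step, and you have correctly sensed where it is but not closed it. Your Schur-type argument is fine as far as it goes ($w(G)'$ is finite, so everything reduces to showing that the finitely many $w$-values have finite order), but that residual statement is not something you can ``quote from the literature'' without further work, and the reference you point to (\cite{bms}) is about finite groups and says nothing about conciseness. Conciseness fails for general words in general groups, so the claim genuinely depends on both the residual finiteness of $G$ and the specific shape of $w$; the relevant theorem in the literature (Detomi--Morigi--Shumyatsky \cite{dms}) applies only to words that \emph{imply virtual nilpotency}, and one must verify that simple commutator words have this property. That verification is the missing idea: one shows (Lemma~\ref{simple} in the paper) that in a metabelian group where $w=[x_{i_1},\ldots,x_{i_n}]$ is a law one may reorder the entries beyond the second so that $w$ becomes $[x_{i_1},\ldots,x_{i_m},\,_{n-m}x_{i_1}]$, and then the substitution $x_{i_1}=y[x,y]$, $x_{i_2}=\cdots=x_{i_m}=y$ yields $[x,\,_n y]=1$; hence such a group is $n$-Engel, finitely generated metabelian groups with $w$ a law are nilpotent, and \cite{dms} then gives that $w$ is (boundedly) concise in residually finite groups. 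Without this intermediate lemma the finiteness of $w(G)$ --- which, as you say, is the only genuine obstacle --- is not established, so the proof is incomplete at its essential point.
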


Recall that a group is residually finite if the intersection of its subgroups of finite index is trivial. Notice also that in Corollary \ref{res} the finiteness of $G_w$ depends on the conciseness of the word $w$ in the class of residually finite groups (see Section \ref{procor}). This is related to a question of P.\,Hall on words assuming only finitely many values in a group (see \cite[Part 1, p. 119]{rob2} for an account).

\section{Proof of Theorem \ref{main}}

The aim of this section is to prove the ``if'' part of Theorem \ref{main}, being the ``only if'' part clear. We will split the proof in two cases, depending on whether the finite group $G$ is soluble or not. In particular, we will show that only the soluble case can occur.

\subsection{The soluble case}

If $G$ is a finite soluble group, the Fitting height of $G$ is the least integer $h$ such that $F_h(G)=G$, where $F_0(G)=1$ and $F_{i}(G)/F_{i-1}(G)=F(G/F_{i-1}(G))$ is the Fitting subgroup of $G/F_{i-1}(G)$ for every $i\geq1$. A finite soluble group with Fitting height at most $2$ is said to be metanilpotent. 

The following lemma is well-known (see \cite[Lemma 3]{bms} for a proof). 

\begin{lem}\label{meta} Let $G$ be a finite metanilpotent group with Fitting subgroup $F(G)$. For $p$ a prime, denote by $O_{p'}(G)$ the maximal normal subgroup of $G$ of order coprime to $p$. If $x\in G$ is a $p$-element such that $[O_{p'}(F(G)), x]=1$, then $x\in F(G)$. 
\end{lem}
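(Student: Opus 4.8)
The plan is to avoid any local analysis inside $F(G)\langle x\rangle$ (an argument there would only place $x$ in the Fitting subgroup of that subgroup, which need not be normal in $G$) and instead to work directly with the normal closure $Y=\langle x^{G}\rangle$, aiming to prove that $Y$ is nilpotent and that $x$ lies in its Sylow $p$-subgroup. Write $F=F(G)$ and $V=O_{p'}(F)$, so that $F=O_p(F)\times V$ and $G/F$ is nilpotent (this is exactly metanilpotency). The first, and in my view decisive, step is to promote the hypothesis $[V,x]=1$ from $x$ to every conjugate of $x$: since $V\trianglelefteq G$ one has $[V,x^{g}]^{g^{-1}}=[V^{g^{-1}},x]=[V,x]=1$ for every $g\in G$, hence each generator $x^{g}$ of $Y$ centralizes $V$, and therefore $Y\le C_G(V)$. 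At the same time $Y$ is generated by $p$-elements, being generated by the conjugates $x^{g}$.

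Next I would control $Y$ through the normal series $1\le Y\cap F\le Y$. On the one hand, because $Y$ centralizes $V$ we get $Y\cap F\le C_F(V)=O_p(F)\times Z(V)$. On the other hand, the images of the $p$-elements $x^{g}$ generate $YF/F$ inside the nilpotent group $G/F$, and in a nilpotent group any subgroup generated by $p$-elements is a $p$-group (the group is the direct product of its Sylow subgroups, and every $p$-element lies in the Sylow $p$-subgroup); hence $Y/(Y\cap F)$ is a $p$-group. This is the single place where metanilpotency is genuinely used, and lining up these two facts about the top and bottom of the series is the main technical point.

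Finally I would isolate the Hall $p'$-subgroup $R$ of $Y\cap F$; by the bound on $Y\cap F$ one has $R\le Z(V)$, and since $Y$ centralizes $V$ this gives $R\le Z(Y)$. As both $(Y\cap F)/R$ and $Y/(Y\cap F)$ are $p$-groups, $Y/R$ is a $p$-group, so $Y$ has a central subgroup $R$ with $p$-quotient and is therefore nilpotent. Consequently $Y=O_p(Y)\times O_{p'}(Y)$, the $p$-element $x$ lies in $O_p(Y)$, and $O_p(Y)$, being characteristic in the normal subgroup $Y$, is a normal $p$-subgroup of $G$; hence $O_p(Y)\le O_p(G)\le F(G)$ and $x\in F(G)$, as required. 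The only obstacle worth flagging is the temptation to argue inside $F\langle x\rangle$: the passage to the normal closure, made possible by the normality of $V$, is precisely what turns a local conclusion into the global statement $x\in F(G)$.
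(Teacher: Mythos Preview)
Your argument is correct. The paper itself does not give a proof of this lemma but simply records it as well-known and cites \cite[Lemma~3]{bms}, so there is no in-paper proof to compare against; your self-contained argument via the normal closure $Y=\langle x^{G}\rangle$ is sound: the normality of $V=O_{p'}(F)$ lets you promote $[V,x]=1$ to $Y\le C_G(V)$, nilpotency of $G/F$ forces $YF/F$ to be a $p$-group, and the Hall $p'$-part $R$ of $Y\cap F$ lands in $Z(V)\le Z(Y)$, so $Y/R$ is a $p$-group, $Y$ is nilpotent, and $x\in O_p(Y)\le O_p(G)\le F(G)$.
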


A subgroup $H$ of a finite group $G$ is called a tower of height $h$ if $H$ can be written as a product $H=P_1\cdots P_h$, where
\begin{enumerate}
	\item[(1)] $P_i$ is a $p_i$-group ($p_i$ a prime) for $i=1,\dots,h$;
	\item[(2)] $P_i$ normalizes $P_j$ for $i<j$;
	\item[(3)] $[P_i,P_{i-1}]=P_i$ for $i=2,\dots,h$.
\end{enumerate}
It follows from (3) that $p_i\neq p_{i+1}$ for $i=1,\dots,h-1$. 

The next lemma is taken from \cite[Lemma 1.9]{turull}.

\begin{lem}\label{Turull}
A finite soluble group $G$ has Fitting height at least $h$ if and only if $G$ has a tower of height $h$.
\end{lem}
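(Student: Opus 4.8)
\emph{Approach and preliminaries.} The plan is to prove the two implications separately, using freely the following standard facts about a finite soluble group $H$: $h(U)\le h(H)$ for every subgroup $U\le H$ (proved by checking $F_i(H)\cap U\le F_i(U)$ by induction on $i$), $h(H)=h\bigl(H/F(H)\bigr)+1$ whenever $H\neq 1$ (from $F_{i+1}(H)/F(H)=F_i\bigl(H/F(H)\bigr)$), $C_H(F(H))\le F(H)$, and the coprime commutator identity $[A,B,B]=[A,B]$.

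\emph{Towers force Fitting height.} Let $H=P_1\cdots P_h\le G$ be a tower of height $h$, with every $P_i\neq 1$. By the first fact it suffices to prove $h(H)\ge h$, and I would induct on $h$; the case $h=1$ is immediate. For $h\ge 2$, condition~(2) makes $P_h$ normal in $H$, so $P_h\le F(H)$. Writing bars for images in $\overline H:=H/F(H)$ we get $\overline H=\overline{P_1}\cdots\overline{P_{h-1}}$, and conditions (1)--(3) are inherited. No factor $\overline{P_i}$ can be trivial: if $\overline{P_i}=1$ for some $i\le h-1$, iterating condition~(3) forces $\overline{P_{h-1}}=1$, i.e. $P_{h-1}\le F(H)$; but then $P_{h-1}$ and $P_h$ both lie in the nilpotent group $F(H)$ with $p_{h-1}\neq p_h$, hence in distinct Sylow subgroups, hence commute, contradicting $[P_h,P_{h-1}]=P_h\neq 1$. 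So $\overline{P_1}\cdots\overline{P_{h-1}}$ is a genuine tower of height $h-1$, whence $h(\overline H)\ge h-1$ by induction and $h(H)=h(\overline H)+1\ge h$.

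\emph{Fitting height produces towers.} For the converse I would prove, by induction on $h$, the stronger assertion: if a group $A$ acts on a finite soluble group $G$ with $h(G)=h$ and $P$ is an $A$-invariant $p$-subgroup with $P\not\le F_{h-1}(G)$, then $G$ has an $A$-invariant tower $P=P_1,P_2,\dots,P_h$ of height $h$. (The lemma follows: if $h(G)\ge h$, replace $G$ by $F_h(G)$, which has Fitting height exactly $h$, take $A=1$, and let $P$ be a Sylow subgroup for a prime dividing $|F_h(G)/F_{h-1}(G)|$.) For the inductive step, $\overline G:=G/F_{h-2}(G)$ is metanilpotent with $F(\overline G)=F_{h-1}(G)/F_{h-2}(G)$, and the image $\overline P$ of $P$ is not contained in $F(\overline G)$; by Lemma~\ref{meta} a $p$-element of $\overline P$ lying outside $F(\overline G)$ fails to centralize $O_{p'}(F(\overline G))$, so $\overline{P_2}:=[O_q(F(\overline G)),\overline P]\neq 1$ for some prime $q\neq p$, with $[\overline{P_2},\overline P]=\overline{P_2}$ and $\overline P$ normalizing $\overline{P_2}\le F(\overline G)$. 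Lifting $\overline{P_2}$ to a $q$-subgroup $P_2$ of $F_{h-1}(G)$ that is invariant under $A$ and $P$ and maps onto $\overline{P_2}$ (by a Frattini / coprime-action argument for $P$ acting on the preimage of $\overline{P_2}$ in $F_{h-1}(G)$), and replacing it by $[P_2,P]$, gives $P_2\neq 1$ with $[P_2,P]=P_2$, $P$ normalizing $P_2$, and $P_2\not\le F_{h-2}(G)$ (since $\overline{P_2}\neq 1$). Now $F_{h-1}(G)$ has Fitting height $h-1$ and carries the action of $A$ and of $P$; applying the inductive hypothesis to it, with acting group $\langle A,P\rangle$ and top floor $P_2\not\le F_{h-2}(F_{h-1}(G))=F_{h-2}(G)$, produces an $\langle A,P\rangle$-invariant tower $P_2,\dots,P_h$ of height $h-1$, and prepending $P=P_1$ yields a tower of height $h$ — the $P$-invariance of $P_3,\dots,P_h$, which is exactly condition~(2) for the top floor, being what the strengthened hypothesis is designed to deliver.

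\emph{Main obstacle.} The first implication is routine once one passes to $H/F(H)$. In the second, the work lies in making the lifting of $\overline{P_2}$ precise — the action of $P$ on the relevant preimage need not be coprime, so producing an invariant Sylow $q$-subgroup takes either a direct Sylow-theoretic argument or a further harmless reduction — and in the bookkeeping that makes the recursion close: one must check that the strengthened inductive statement is strong enough to return a tower compatible with the two floors already constructed (same prime data below, invariance under $P$), and that the successive replacements $P_i\leftarrow[P_i,P_{i-1}]$ leave conditions (2) and (3) intact. Lemma~\ref{meta}, together with $C_H(F(H))\le F(H)$ and coprime action, is the engine driving each single step; assembling them into a globally consistent tower is where I expect most of the effort to go.
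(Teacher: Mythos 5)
The paper offers no proof of this lemma---it is imported verbatim from \cite[Lemma 1.9]{turull}---so your argument has to stand entirely on its own. Your first implication (a tower of height $h$ forces Fitting height at least $h$) is correct and complete: passing to $H/F(H)$, discarding the bottom floor $P_h\le F(H)$, and checking that no surviving floor collapses is the right argument, and the auxiliary facts you invoke ($h(U)\le h(G)$ for $U\le G$, and $h(H)=h(H/F(H))+1$) are standard and correctly applied.

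The second implication, however, is broken, and not merely at the step you flag as delicate. Your strengthened inductive statement---given an ($A$-invariant) $p$-subgroup $P\not\le F_{h-1}(G)$, there is a tower of height $h$ whose top floor is that prescribed $P$---is false. Take $G=S_4$, $h=3$, $A=1$, and $P$ a Sylow $2$-subgroup of order $8$; this is exactly the $P$ your own reduction selects, since $|F_3(S_4)/F_2(S_4)|=|S_4/A_4|=2$. A second floor $P_2$ would have to be a nontrivial subgroup of odd order normalized by $P$, but the only nontrivial odd-order subgroups of $S_4$ are the four subgroups of order $3$, each with normalizer of order $6$, so none is normalized by a subgroup of order $8$. (Note that $S_4$ does have towers of height $3$, for instance $\langle(12)\rangle\cdot\langle(123)\rangle\cdot V_4$, but none whose top floor is a full Sylow $2$-subgroup.) Consequently the difficulty you acknowledge---that $P$ need not act coprimely on the preimage of $\overline{P_2}$, so a $P$-invariant Sylow $q$-subgroup may fail to exist---is not a removable technicality but the visible symptom of the fact that the top floor of a tower cannot be prescribed in advance. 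A correct construction must reverse the order of choices: first fix a Sylow $q$-subgroup $Q$ of the relevant normal subgroup $K\le F_{h-1}(G)$, then use the Frattini argument $G=K\,N_G(Q)$ to locate a $p$-subgroup $P_1\le N_G(Q)$ with $P_1\not\le F_{h-1}(G)$; equivalently, one must permit the already-built upper floors to be replaced by conjugates at each descent (or coordinate all the choices at once via a Sylow system). That restructuring is the real content of Turull's lemma, and it is absent from your argument.
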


Given two nonempty subsets $X$ and $Y$ of a group $G$, let 
\[[X, \, _{n} Y]=[[X, \, _{n-1} Y], Y]\]
where $n\geq 2$ and $[X,Y]$ is the commutator subgroup of $X$ and $Y$. We write $[X,\, _{n} y]$ when $Y=\{y\}$. Then, assuming that $X$ is normalized by $y$, it easy to see that 
\[[X, \, _{n} y]=[X, \, _{n} \langle y\rangle]\]
for every $n\geq 1$.

As a straightforward corollary of \cite[Theorem 5.3.6]{go}, we have:

\begin{lem}\label{Gor}
For $p$ a prime, let $P$ be a $p$-subgroup of a finite group $G$. Suppose that $P$ is normalized by an element $x\in G$ of $p'$-order. Then $$[P,x]=[P,x,x].$$
\end{lem}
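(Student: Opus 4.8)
The plan is to recognize the asserted equality as the standard coprime-action identity $[N,A]=[N,A,A]$ and then to transfer it from the acting subgroup $\langle x\rangle$ to the single element $x$.

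First I would pass to the subgroup $H=P\langle x\rangle$ of $G$. Since $P$ is a $p$-group while $x$ has order prime to $p$, the cyclic group $\langle x\rangle$ is a $p'$-group, so $\langle x\rangle\cap P=1$; as $x$ normalizes $P$, this exhibits $H=P\rtimes\langle x\rangle$ as a semidirect product in which $\langle x\rangle$ acts by conjugation on $P$ with $(|\langle x\rangle|,|P|)=1$. This is exactly the hypothesis under which \cite[Theorem 5.3.6]{go} applies (any solubility requirement in that statement is vacuous here, $P$ being a $p$-group). Among the conclusions of that theorem, for a coprime action of $A$ on a group $N$, is the identity $[N,A]=[N,A,A]$; applying it with $N=P$ and $A=\langle x\rangle$ gives
\[
[P,\langle x\rangle]=[[P,\langle x\rangle],\langle x\rangle]=[P,\langle x\rangle,\langle x\rangle].
\]

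It remains to replace $\langle x\rangle$ by $x$ throughout. The inclusion $[P,x,x]\subseteq[P,x]$ is automatic, so only the reverse one carries content. By the elementary remark recorded just before the statement, since $P$ is normalized by $x$ we have $[P,x]=[P,\langle x\rangle]$; and since $[P,x]$ is again normalized by $x$, the same remark applied once more gives $[P,x,x]=[P,\,_{2}x]=[P,\,_{2}\langle x\rangle]=[[P,\langle x\rangle],\langle x\rangle]$. Chaining these with the displayed identity yields $[P,x]=[P,\langle x\rangle]=[P,\langle x\rangle,\langle x\rangle]=[P,x,x]$, as desired.

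I do not anticipate any genuine obstacle: as the text says, this is a straightforward corollary of a textbook result. The only two points needing a moment's attention are (i) checking that the action of $\langle x\rangle$ on $P$ is genuinely coprime, which amounts to the order computation $\langle x\rangle\cap P=1$, and (ii) keeping the bookkeeping straight when moving between the subgroup-commutators $[P,\langle x\rangle]$ and $[P,\langle x\rangle,\langle x\rangle]$, to which \cite[Theorem 5.3.6]{go} speaks, and the element-commutators $[P,x]$ and $[P,x,x]$ appearing in the lemma --- both handled by the preparatory remark.
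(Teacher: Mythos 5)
Your proposal is correct and matches the paper exactly: the paper gives no proof beyond citing \cite[Theorem 5.3.6]{go}, and your argument is precisely the intended deduction --- form $P\rtimes\langle x\rangle$, invoke the coprime-action identity $[P,\langle x\rangle]=[P,\langle x\rangle,\langle x\rangle]$, and translate between element- and subgroup-commutators via the remark preceding the lemma. No gaps.
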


\begin{lem}\label{bbb}
Let $w=[x_{i_1}, \ldots, x_{i_k}]$ be a simple commutator word with $i_{1} \neq i_j$ for every $j \in \{2, \ldots, k\}$, and let $G$ be a finite group in which  $|ab|=|a||b|$ for any $w$-values $a,b\in G$ of coprime orders. For $p$ a prime, let $P$ be a $p$-subgroup of $G$ normalized by a $w$-value $x \in G$ of $p'$-order.  Then $[P, x] = 1$.
\end{lem}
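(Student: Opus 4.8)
The plan is to reduce, via the hypothesis $i_1 \neq i_j$ for $j \geq 2$, to a statement about the structure of $\langle P, x\rangle$ and then exploit the coprimality condition on $w$-values. First I would observe the crucial point: since $x$ is a $w$-value and $w = [x_{i_1}, x_{i_2}, \ldots, x_{i_k}]$ with the first variable $x_{i_1}$ occurring only in the leftmost position, we can write $x = [g, h_2, \ldots, h_k]$ for suitable $g, h_2, \ldots, h_k$, and more importantly, for any element $y \in \langle P, x\rangle$ that normalizes the relevant subgroups we should be able to produce further $w$-values by substituting into the first slot. Concretely, I expect that elements of $[P,x]$ (or closely related commutators) are themselves $w$-values: because $x = w(g, \ldots)$ and $x$ normalizes $P$, each $[\,p, x\,]$ with $p \in P$ arises by replacing $g$ with an appropriate element, using that $i_1$ is not repeated so the substitution does not disturb the other entries. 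This is the structural heart of the argument and where the hypothesis on $w$ is used essentially.

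Next I would set up the reduction to show $[P,x]=1$. Suppose for contradiction $[P,x]\neq 1$. By Lemma \ref{Gor}, $[P,x]=[P,x,x]$, so $x$ acts nontrivially on $Q:=[P,x]$ without fixed points "at the top" in the usual coprime-action sense; in particular $\langle Q, x\rangle$ has a tower of height $2$ (with $P_1 = \langle x\rangle$ a $p'$-group and $P_2 = Q$ a $p$-group satisfying $[Q,x]=Q$), so by Lemma \ref{Turull} it has Fitting height $2$, i.e. it is metanilpotent and not nilpotent. Then I would pick a prime $q \mid |x|$ and let $x_q$ be the $q$-part of $x$; replacing $x$ by $x_q$ if necessary (noting $x_q$ still acts without fixed points on $Q$ by coprimality, and is still built from $w$ in the same way since powers of $w$-values... — here one must be careful, $x_q$ need not be a $w$-value, so instead I would keep $x$ itself and work with $Q \rtimes \langle x \rangle$). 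Inside this metanilpotent group, Lemma \ref{meta} says that if $x$ centralizes $O_{p'}(F(Q\langle x\rangle))$ then $x \in F$, contradicting $[Q,x]=Q \neq 1$; so $x$ does not centralize that subgroup, which forces the non-nilpotency to be "visible" via a coprime pair.

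The final step is to contradict $|ab| = |a||b|$ for coprime $w$-values. Having arranged that $x$ acts nontrivially and fixed-point-freely-enough on a $p$-group $Q = [Q,x]$, I would produce two $w$-values $a$ and $b$ of coprime orders (one a nontrivial element of $Q$, a $p$-element, obtained as a commutator $[q,x]$ which — by the non-repetition of $x_{i_1}$ — is again a $w$-value after re-substituting the first argument; the other essentially $x$ or a suitable $w$-value $p'$-element in $\langle x\rangle$) such that $ab$ has order strictly smaller than $|a||b|$. The mechanism is the standard one: in a non-nilpotent group of the form $Q \rtimes \langle x\rangle$ with $[Q,x]=Q$, one finds $q \in Q$ with $q^x q^{-1}$ of order $p$ but $q \cdot x$ (or a conjugate) collapsing, so $|qx| < |q|\,|x|$; translating both $q$ and $x$ back into $w$-values via the first-slot substitution gives the contradiction.

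I expect the main obstacle to be the bookkeeping in the substitution argument: verifying rigorously that commutators like $[p,x]$ and the relevant $p'$-elements really are $w$-values, which requires writing $x$ as $w$ evaluated on a generating configuration and checking that the non-repetition of $x_{i_1}$ lets one vary the first argument freely while keeping $x$ normalizing $P$ throughout. A secondary difficulty is ensuring the coprime pair $a, b$ one extracts genuinely violates $|ab| = |a||b|$ rather than merely witnessing non-nilpotency in some weaker sense; this likely needs the explicit Fitting-height-$2$ / tower structure from Lemmas \ref{Turull} and \ref{meta} together with Lemma \ref{Gor} to pin down an element on which the product order drops.
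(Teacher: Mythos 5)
There is a genuine gap --- in fact two, and they sit exactly at the two places you yourself flag as ``obstacles''. First, your structural claim is not the right one: $[p,x]$ for $p\in P$ is in general \emph{not} a $w$-value when $k>2$, and it does not ``arise by replacing $g$'' in a representation $x=w(g,h_2,\dots,h_k)$ --- replacing the first argument there yields $[p',h_2,\dots,h_k]$, a commutator against the original entries $h_2,\dots,h_k$ of $x$, which has nothing to do with $[p,x]$ and need not lie in $P$. The representation of $x$ as a $w$-value is irrelevant to this step; what matters is only that $x$ normalizes $P$ and has $p'$-order. The correct substitution, and the only place the hypothesis $i_1\neq i_j$ is used, is to plug an element $g\in P$ into the slot $x_{i_1}$ and the single element $x^{-1}$ into \emph{every} other slot $x_{i_2},\dots,x_{i_k}$ (a consistent assignment precisely because $i_1$ does not recur, even though the $i_j$ with $j\geq 2$ may coincide among themselves): this exhibits $[g, \, _{k-1} x^{-1}]$ as a $w$-value lying in $P$, hence a $p$-element. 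Combined with Lemma \ref{Gor} iterated, which gives $[P,x^{-1}]=[P, \, _{k-1} x^{-1}]$, these elements control all of $[P,x]$.

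Second, the ``collapse'' you hope to extract from the Fitting-height and tower structure of $Q\rtimes\langle x\rangle$ is not where the contradiction comes from, and the detour through Lemmas \ref{meta} and \ref{Turull} does not produce the required coprime pair of $w$-values (you concede as much in your final sentence). The actual mechanism is an elementary identity needing no non-nilpotency input at all: writing $u=[g, \, _{k-2} x^{-1}]$ one has $[u,x^{-1}]\,x=u^{-1}xu$, so the product of the two coprime-order $w$-values $a=[g, \, _{k-1} x^{-1}]$ and $b=x$ is a conjugate of $x$ and hence has order $|x|=|b|$. The hypothesis $|ab|=|a||b|$ then forces $a=1$, i.e. $[g, \, _{k-1} x^{-1}]=1$ for every $g\in P$, whence $[P,x]=1$. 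Your outline has the right skeleton --- manufacture a $p$-element $w$-value inside $P$ and pair it with $x$ against the order hypothesis --- but the two steps that make it work (the all-slots substitution by $x^{-1}$, and the observation that the resulting product is literally a conjugate of $x$) are absent, and the metanilpotency machinery you propose in their place would not supply them.
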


\begin{proof}
By Lemma \ref{Gor}, \[[P, x^{-1}]=[P,\, _{k-1} x^{-1}];\] thus the result will follow once it is shown that $N=[P,\, _{k-1} x^{-1}]=1$.

Let $[g, \, _{k-1} x^{-1}] \in N$, for some $g \in P$. Of course the orders of the $w$-values $x$ and $[g, \, _{k-1} x^{-1}]$ are coprime. Then, by hypothesis,
\[|[g, \, _{k-1} x^{-1}] x| = |[g, \, _{k-1} x^{-1}]| |x|. \]
However
\[ [g, \, _{k-1} x^{-1}] x= [g, \, _{k-2} x^{-1}]^{-1} x [g, \, _{k-2} x^{-1}] \] 
is a conjugate of $x$. So $|[g, \, _{k-1} x^{-1}] x|= |x|$ and consequently $[g, \, _{k-1} x^{-1}]=~1$.	
\end{proof}

\begin{lem}\label{directprod}
Let $w=[x_{i_1}, \ldots, x_{i_k}]$ be a simple commutator word and let $G = A \times B$ be an arbitrary group. Then $w(G) = w(A) \times w(B)$.
\end{lem}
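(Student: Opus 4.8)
The plan is to prove the two inclusions separately, the heart of the matter being that $w$-values in a direct product decompose coordinatewise.

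First I would establish $w(A)\times w(B)\le w(G)$. Identifying $A$ with $A\times 1$ and $B$ with $1\times B$, both are subgroups of $G$, and any $w$-value computed inside $A\times 1$ (resp.\ inside $1\times B$) is in particular a $w$-value of $G$; hence $w(A)\times 1 = w(A\times 1)\le w(G)$ and $1\times w(B)=w(1\times B)\le w(G)$. Since these two subgroups centralize each other, their product $w(A)\times w(B)$ is contained in $w(G)$.

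For the reverse inclusion it suffices to show $G_w\subseteq w(A)\times w(B)$, because the right-hand side is a subgroup. Let $\pi_A\colon G\to A$ and $\pi_B\colon G\to B$ be the canonical projections. Given a $w$-value $g=w(g_1,\ldots,g_m)$ with $g_j=(a_j,b_j)$, applying the homomorphism $\pi_A$ yields $\pi_A(g)=w(a_1,\ldots,a_m)\in w(A)$, and similarly $\pi_B(g)=w(b_1,\ldots,b_m)\in w(B)$. Therefore $g=(\pi_A(g),\pi_B(g))\in w(A)\times w(B)$, as required. Combining the two inclusions gives $w(G)=w(A)\times w(B)$.

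There is no real obstacle here: the statement in fact holds, with the same proof, for an arbitrary group-word $w$, and the only point to keep in mind is that evaluation of a word commutes with the projection homomorphisms. Alternatively one could run an induction on the length $k$ of the commutator, using $[(a,b),(c,d)]=([a,c],[b,d])$ at each step, but the projection argument is cleaner and does not even use the ``simple commutator'' hypothesis.
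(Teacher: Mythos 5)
Your proof is correct and rests on the same key fact as the paper's: $w$-values in $A\times B$ decompose coordinatewise. The paper establishes this by a one-line induction giving the identity $[a_{i_1}b_{i_1},\ldots,a_{i_k}b_{i_k}]=[a_{i_1},\ldots,a_{i_k}][b_{i_1},\ldots,b_{i_k}]$, whereas you phrase the same decomposition via the projection homomorphisms and spell out the two inclusions; as you note, your formulation makes it clear that the lemma holds for an arbitrary word, not just for simple commutators.
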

\begin{proof} 
By induction on $n$, we get
\[[a_{i_1}b_{i_1},\ldots , a_{i_k}b_{i_k}] =[a_{i_1},\ldots , a_{i_k}][b_{i_1},\ldots , b_{i_k}] \]
for every $a_{i_1},\ldots, a_{i_k} \in A$, and every $b_{i_1},\ldots, b_{i_k} \in B$.
\end{proof}

We are now able to prove the announced result, for soluble groups.

\begin{pro}\label{solu} 
Let $w=[x_{i_1}, \ldots, x_{i_k}]$ be a simple commutator word with $i_{1} \neq i_j$ for every $j \in \{2, \ldots, k\}$, and let $G$ be a finite soluble group in which  $|ab|=|a||b|$ for any  $a,b\in G_w$ of coprime orders. Then $w(G)$ is nilpotent.
\end{pro}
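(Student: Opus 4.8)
The plan is to assume for contradiction that $w(G)$ is not nilpotent and to produce, inside $w(G)$, a tower of height $3$, contradicting the metanilpotency that the hypothesis will force. Since $G$ is soluble and $w(G) \neq 1$ is not nilpotent, $w(G)$ has Fitting height at least $2$; I want to show it is exactly $2$, i.e. metanilpotent. Suppose not: then by Lemma \ref{Turull} applied to $w(G)$ there is a tower $P_1 P_2 P_3$ of height $3$ inside $w(G)$, with $P_i$ a $p_i$-group, $p_1, p_2, p_3$ with $p_1 \neq p_2 \neq p_3$, $P_i$ normalizing $P_j$ for $i<j$, and $[P_{i+1}, P_i] = P_{i+1}$. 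The key point will be that the generators of $w(G)$ are $w$-values, so by a standard argument one may replace the $P_i$ by subgroups generated by $w$-values: more precisely, one arranges that $P_2$ is generated by $w$-values lying in $w(G)$, and each such $w$-value, having order a power of $p_2$ and normalizing the $p_3'$-part appropriately, is forced by Lemma \ref{bbb} to centralize $P_3$ — but $[P_3, P_2] = P_3 \neq 1$, a contradiction. (The same mechanism rules out the bottom two floors once one knows $P_1$ can be taken inside a nilpotent normal subgroup.)

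More carefully, here is the order in which I would carry out the steps. First, reduce to the case where $w(G)$ itself is the group under study, and note $w(w(G)) = w(G)$; also, using Lemma \ref{directprod} and passing to Sylow components where convenient, one may make the $p_i$'s concrete. Second, invoke Lemma \ref{Turull}: if $w(G)$ is not metanilpotent it contains a tower $P_1 P_2 P_3$ of height $3$. Third — and this is where the hypothesis $i_1 \neq i_j$ for $j \geq 2$ really enters — argue that, because $w$ is a commutator word, the set $G_w$ generates $w(G)$ and is closed under conjugation, and a tower can be chosen so that $P_2 = \langle P_2 \cap G_w \rangle$ or at least so that $P_2$ is generated by $p_2$-elements that are $w$-values. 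Each such $w$-value $x$ has $p_3'$-order (since $p_2 \neq p_3$) and normalizes the $p_3$-group $P_3$ (by property (2) of the tower), so Lemma \ref{bbb} gives $[P_3, x] = 1$; ranging over generators, $[P_3, P_2] = 1$, contradicting $[P_3, P_2] = P_3$. Hence $w(G)$ is metanilpotent. Fourth, finish within a metanilpotent group: now $F(w(G))$ is nilpotent and $w(G)/F(w(G))$ is nilpotent, and I want to show the top quotient is trivial. Take any $w$-value $x$ of prime-power order, say a $p$-element; it normalizes $O_{p'}(F(w(G)))$, and by Lemma \ref{bbb} (applied to each Sylow $q$-subgroup of $O_{p'}(F(w(G)))$ for $q \neq p$) we get $[O_{p'}(F(w(G))), x] = 1$, whence $x \in F(w(G))$ by Lemma \ref{meta}. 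A general $w$-value is a product of its primary components, each of which is again a power of a $w$-value and hence lies in $F(w(G))$; therefore $G_w \subseteq F(w(G))$, so $w(G) = \langle G_w \rangle \leq F(w(G))$ is nilpotent.

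The main obstacle I anticipate is the third step: making rigorous the passage from an abstract tower in $w(G)$ to one whose second floor $P_2$ is generated by \emph{$w$-values} so that Lemma \ref{bbb} can be applied. A tower supplied by Lemma \ref{Turull} need not a priori have this feature, so one must either choose the tower cleverly (using that $P_1 P_2$ can be taken with $P_2 = [P_2, P_1]$ and $P_1$ acting coprimely, then replacing $P_2$ by the subgroup generated by the $w$-values it contains while preserving $[P_3, P_2] = P_3$) or argue at the level of a minimal counterexample that $G = w(G)$ and exploit that Sylow subgroups of $w(G)$ are generated by $w$-values of prime-power order. A secondary technical point is handling the primary decomposition of a $w$-value: one needs that a power of a $w$-value is again a $w$-value, which holds because $w$ is a commutator word (indeed for $w = [x_{i_1}, \ldots, x_{i_k}]$, replacing $x_{i_1}$ by a power multiplies the commutator accordingly only in the abelianized sense, so more care is needed — in fact one uses that the set of $w$-values is closed under taking powers when restricted to the relevant elements, or argues via $\langle G_w\rangle$ directly). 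Modulo these points, the coprime-action lemmas (Lemmas \ref{Gor}, \ref{bbb}, \ref{meta}) and Turull's characterization do all the real work, and the hypothesis on the first variable is exactly what guarantees $[g, {}_{k-1} x^{-1}]\, x$ is conjugate to $x$ in the proof of Lemma \ref{bbb}, which is the engine of the whole argument.
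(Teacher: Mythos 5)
Your architecture matches the paper's --- bound the Fitting height via Turull's tower characterization, then finish inside a metanilpotent group using Lemmas \ref{bbb} and \ref{meta} --- but the two steps you flag as obstacles are genuine gaps, and in both cases the missing idea is one you do not supply. For the tower step, you try to arrange that $P_2$ itself is generated by $w$-values and correctly suspect that this cannot be extracted from Lemma \ref{Turull}. The paper never modifies the tower: it writes $P_3=[P_3,[P_2,P_1]]$ and observes that for each $x\in P_1$ Lemma \ref{Gor} gives $[P_2,x]=[P_2,\,_{k-1}x]$, which is generated by elements $[g,\,_{k-1}x]$ with $g\in P_2$ --- and these \emph{are} $w$-values of $p_2$-power order, precisely because $i_1\neq i_j$ for $j\geq 2$ allows one to substitute $g$ for $x_{i_1}$ and $x$ for every other variable. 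So the hypothesis on the first variable enters a second time here, not only inside Lemma \ref{bbb} as you suggest. Lemma \ref{bbb} then yields $[P_3,[P_2,x]]=1$ for every $x\in P_1$, hence $P_3=[P_3,[P_2,P_1]]=1$, a contradiction. Without this observation your third step does not go through: there is no reason a tower can be re-chosen with $P_2=\langle P_2\cap G_w\rangle$ while preserving $[P_3,P_2]=P_3$.

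The second gap is your treatment of a general $w$-value in the metanilpotent case: the primary components of a $w$-value are powers of it, and powers of $w$-values need not be $w$-values, so $G_w\subseteq F$ does not follow from knowing that $w$-values of prime-power order lie in $F$. The paper's fix is to pass to the nilpotent quotient $G/F$, which is the direct product of its Sylow subgroups; Lemma \ref{directprod} (which you mention only in passing) then shows that if $w(G)\not\leq F$, some Sylow $p$-subgroup $P$ of $G$ has $w(P)\not\leq F$, so there is a $w$-value $x\in P\setminus F$ that is automatically a $p$-element. Lemma \ref{bbb} gives $[O_{p'}(F),x]=1$ and Lemma \ref{meta} forces $x\in F$, a contradiction. (Whether one measures the Fitting height of $G$, as the paper does, or of $w(G)$, as you do, is immaterial.) In short, the skeleton and the choice of lemmas are right, but the two points you defer are exactly the points where the proof has content, and neither of your proposed workarounds closes them.
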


\begin{proof}
Let $h$ be the Fitting height of $G$. Firstly, we show that $h \leq 2$. Suppose by way of contradiction that $h \geq 3$. Then, by Lemma \ref{Turull}, there exists a tower \[P_1P_2P_3 \cdots P_h\] of height $h$ in $G$. Since $P_2 = [P_2, P_1]$ and $P_3 = [P_3, P_2]$, we have \[ P_3 = [P_3, [P_2, P_1]].\] Furthermore, by Lemma \ref{Gor}
\[[P_2, x] = [P_2, \,_{k-1} x]]\]
for every $x \in P_1$. Hence $[P_2, x]$ is generated by $w$-values of $p_2$-orders. Applying Lemma \ref{bbb}, we deduce that $P_3$ commutes with $[P_2, x]$. Thus \[[P_3, [P_2, P_1]] = ~1,\] which is impossible.

Now let $h = 2$, the case $h = 1$ being obvious. Denote by $F$ the Fitting subgroup of $G$. If $w(G) \leq F$, we are done. Suppose that $w(G)$ is not contained in $F$. Since $G/F$ is nilpotent, by Lemma \ref{directprod} there exists a Sylow $p$-subgroup $P$ of $G$ such that $w(P/F)$ = $w(P)F/F$ is nontrivial. Let $x \in w(P)$ be a $w$-value which does not belong to $F$. Then $[O_{p'}(F), x] = 1$, by Lemma \ref{bbb}, from which it follows that $x \in F$, by Lemma \ref{meta}: a contradiction.
\end{proof}

\subsection{The general case}

The following is a well-known consequence of the Baer-Suzuki Theorem (see, for instance, \cite[Theorem 2.13]{Isaacs}). 

\begin{lem}\label{rembearsuz}
Let $G$ be a finite non-abelian simple group. If $x$ is an element of $G$ of order $2$, then there exists $g \in G$ such that $[x,g]$ has odd prime order. 
\end{lem}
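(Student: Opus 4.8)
The plan is to invoke the Baer--Suzuki Theorem, which asserts that if $x$ is an element of a finite group $G$ such that $\langle x, x^g\rangle$ is nilpotent for every $g \in G$, then $x$ lies in $O_\infty(G)$, the largest normal nilpotent (equivalently, here, solvable) subgroup of $G$. Since $G$ is non-abelian simple, $O_\infty(G) = 1$, so no nontrivial element can have this property. Applying this to our element $x$ of order $2$, we conclude that there exists $g \in G$ for which $\langle x, x^g\rangle$ is \emph{not} nilpotent. The task is then to extract from this an element of odd prime order of the form $[x, h]$ for suitable $h$.

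First I would set $y = x^g$ and $D = \langle x, y\rangle$, a dihedral-type group generated by two involutions; indeed $D$ is generated by the involutions $x$ and $y$, so $D$ is a dihedral group (possibly infinite in general, but finite here) with cyclic part $\langle xy\rangle$ of order $n = |xy|$. The commutator $[x, g^{-1} x g] = x^{-1} y = xy$ (using $x^{-1} = x$), so $[x, g^{-1}]$-type expressions naturally produce the element $xy$; more precisely $[x, x^g] = x \cdot x^g \cdot x \cdot x^g$, and one checks $[x, x^g] = (xy)^{-2}$ while the element $xy$ itself equals $[x, g]^{-1}\cdot(\text{something})$—the cleanest route is to note $xy = x^{-1}(x^g) = [x, g]$ when we write $[x,g] = x^{-1}x^g$. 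So $[x,g]$ has order $n$. Now, a dihedral group $D$ with cyclic part of order $n$ is nilpotent if and only if $n$ is a power of $2$. Since $D$ is not nilpotent, $n$ is divisible by some odd prime $r$. Then $(xy)^{n/r} = [x,g]^{n/r}$ has order exactly $r$, an odd prime.

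The remaining point is to realize this power of $[x,g]$ itself as a single commutator $[x, h]$ for some $h \in G$. Write $z = (xy)^{n/r}$, an element of order $r$ in the cyclic subgroup $\langle xy\rangle$ of the dihedral group $D$. Since $x$ inverts $xy$ by conjugation, $x$ also inverts $z$, so $x^{-1} z x = z^{-1}$, giving $[x^{-1}, z^{?}]$... more directly: because $\langle z\rangle$ has odd order $r$, there is an integer $t$ with $2t \equiv 1 \pmod r$, and setting $h = x z^{-t}$ one computes $[x, h] = x^{-1} h^{-1} x h = x^{-1}(z^t x^{-1}) x (x z^{-t})$; using $x^{-1} z^t x = z^{-t}$ this collapses to $z^{-t}\cdot z^{-t} = z^{-2t} = z^{-1}$, which has order $r$. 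Thus $[x, h]$ has odd prime order $r$, as required.

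I expect the main obstacle to be bookkeeping: making sure the element of odd prime order is genuinely of the form $[x, g]$ rather than merely contained in $\langle x, x^g\rangle$, and handling the inversion identities in the dihedral group without sign errors. One should double-check the convention for the commutator $[a,b] = a^{-1}b^{-1}ab$ and verify $[x, g] = x^{-1}x^g$ accordingly; with $x$ an involution this is exactly $x x^g$, and the dihedral structure does the rest. No deep input beyond Baer--Suzuki and the elementary structure of dihedral groups is needed.
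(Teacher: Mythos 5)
Your proposal is correct and follows exactly the route the paper intends: the paper gives no proof but cites the Baer--Suzuki Theorem, and your argument (Baer--Suzuki to find $g$ with $\langle x, x^g\rangle$ not a $2$-group, the dihedral structure of a group generated by two involutions, and the identity $[x,g]=x^{-1}x^g=xx^g$ followed by rewriting a suitable power of $xx^g$ as a single commutator with $x$) is the standard way to fill in that "well-known consequence." The only blemish is the harmless sign slip $[x,x^g]=(xy)^2$, not $(xy)^{-2}$, which you never actually use.
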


In the sequel  we will require a property of finite simple groups whose proper subgroups are soluble. These groups have been classified by Thompson in \cite{thompson}, and they are known as finite minimal simple groups.

\begin{pro}\label{msg}
Let $G$ be a finite minimal simple group. Then $G$ contains a subgroup $H= A \rtimes T$ where $A$ is an elementary abelian $2$-group and $T$ is a subgroup of odd order such that $C_A(T)=1$.
Further, $A=[A,T]$. 
\end{pro}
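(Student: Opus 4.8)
The plan is to go through Thompson's classification of finite minimal simple groups and exhibit the required subgroup $H = A \rtimes T$ explicitly in each family. Recall Thompson's list: the minimal simple groups are precisely $\Psl(2,p)$ for $p > 3$ prime with $5 \mid p^2-1$ (equivalently $p \equiv \pm 1 \pmod 5$ is \emph{not} required — all $p \geq 5$ with the Dickson conditions), $\Psl(2,2^q)$ with $q$ prime, $\Psl(2,3^q)$ with $q$ an odd prime, $\Psl(3,3)$, and the Suzuki groups $\Sz(2^q)$ with $q$ an odd prime. The key observation is that in each case a dihedral or Frobenius-type subgroup does the job: what we need is an elementary abelian $2$-subgroup $A$ admitting a fixed-point-free action (i.e.\ $C_A(T) = 1$) by an odd-order group $T$ with $A = [A,T]$.

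**First I would** handle the two-dimensional linear families, which are the bulk of the list. In $\Psl(2,2^q)$ the natural choice is to take $A$ to be the image of the group of upper unitriangular matrices, an elementary abelian $2$-group of order $2^q$, and $T$ a suitable subgroup of the diagonal torus of odd order $2^q-1$ acting by conjugation; this action is fixed-point-free on the nonidentity elements and $A = [A,T]$ follows since the action is nontrivial and $A$ is irreducible (or one decomposes and notes each summand is moved). For $\Psl(2,p)$ and $\Psl(2,3^q)$ the relevant elementary abelian $2$-subgroup is a Klein four-group $V$; it is well known that $V$ sits inside $A_4 \leq \Psl(2,p)$ (since $\Psl(2,p)$ always contains $A_4$ when $p \geq 5$), and the $3$-cycle $T = \langle t \rangle$ of order $3$ permutes the three involutions of $V$ cyclically, so $C_V(T) = 1$ and $[V,T] = V$. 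For $\Psl(3,3)$ and $\Sz(2^q)$ one argues similarly: $\Psl(3,3)$ contains $A_4$ (hence the Klein-four-with-$3$-cycle configuration), and in $\Sz(2^q)$ the Sylow $2$-subgroup has centre an elementary abelian group of order $2^q$ normalized by a cyclic torus of odd order $2^q - 1$ acting fixed-point-freely, from which the centre (or a $T$-irreducible piece of it) serves as $A$.

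**The main obstacle** I anticipate is not the existence of $A$ and $T$ — those are classical — but rather organizing the verification of $C_A(T) = 1$ uniformly, and in particular ensuring that after passing to a $T$-invariant piece one still has $A = [A,T]$ exactly (not merely $[A,T] \leq A$). Here I would use the standard coprime-action fact: since $|T|$ is odd and $A$ is a $2$-group, $A = [A,T] \times C_A(T)$, so the two conditions $C_A(T) = 1$ and $A = [A,T]$ are equivalent, and it suffices to produce \emph{any} elementary abelian $2$-subgroup on which $T$ acts without nonzero fixed points. Thus the real content reduces to: in every minimal simple group, find an odd-order element or subgroup $T$ and an elementary abelian $2$-subgroup $A$ normalized by $T$ with $C_A(T) = 1$. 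Given the explicit matrix descriptions (Borel subgroup $=$ unipotent radical $\rtimes$ torus) available for all the $\Psl(2,q)$ and $\Sz$ families, and the embedding of $A_4$ for the remaining small cases, this is a finite and routine check once the framework is set up; I would present it family by family, citing the relevant structure of Borel subgroups and the subgroup lattice of $\Psl(2,q)$ (Dickson) rather than recomputing.
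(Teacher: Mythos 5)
Your proposal is correct and follows essentially the same route as the paper: run through Thompson's list, use the Klein four-group inside a copy of $A_4$ for $\Psl(2,3^q)$, $\Psl(2,p)$ and $\Psl(3,3)$, use the Frobenius/Borel subgroup (unipotent radical acted on by the odd-order torus) for $\Psl(2,2^q)$ and $\Sz(2^q)$, and deduce $A=[A,T]$ from the coprime decomposition $A=[A,T]\times C_A(T)$. The only blemish is your garbled recollection of the arithmetic condition in Thompson's classification (it is $p^2+1\equiv 0 \pmod 5$, not $5\mid p^2-1$), which is harmless here since your $A_4$ argument works for every prime $p>3$.
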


\begin{proof}
According to Thompson's classification \cite[Corollary 1]{thompson}, the group $G$ is isomorphic to one of the following groups:
\begin{enumerate}
\item[(1)] $\Psl(2, 2^p)$, where $p$ is any prime;
\item[(2)] $\Psl(2, 3^p)$, where $p$ is any odd prime;
\item[(3)] $\Psl(2, p)$, where $p > 3$ is any prime such that $p^2 + 1 \equiv 0 \mod 5$;
\item[(4)] $\Psl(3, 3)$;
\item[(5)] $\Sz(2^p)$, where $p$ is any odd prime.
\end{enumerate}
Since the groups in (2), (3) and (4) have a subgroup isomorphic to the alternating group of degree $4$ (see, for instance, \cite[Theorem 6.26]{suzuki} and \cite[Theorem 7.1(2)]{bloom}), we may consider the other two cases. 

If $G$ is isomorphic to $\Psl(2,2^p)$, then Theorem 6.25 of \cite{suzuki} shows that $G$ contains a Frobenius group $H= A \rtimes T$ where $A$ is an elementary abelian $2$-group of order $q$, and $T$ is a cyclic group of order $q-1$.

If $G$ is isomorphic to the Suzuki group $\Sz(2^p)$, then $G$ contains a Frobenius group $F=Q \rtimes T$ where $Q$ is a Sylow $2$-subgroup of $G$ of order $2^{2p}$ and $T$ is a cyclic subgroup of order $2^p-1$ (see \cite[Theorem 9]{suzuki2}). Thus, taking $A$ to be a minimal normal subgroup of $F$ contained in $Q$, the subgroup $H=A \rtimes T$ is as required.

Finally, notice that in both cases we have $A=[A,T]$ by  \cite[Theorem 5.2.3]{go}.
\end{proof}

\begin{lem}\label{generators} 
Let $w=[x_{i_1}, \ldots, x_{i_k}]$ be a simple commutator word with $i_{1} \neq i_j$ for every $j \in \{2, \ldots, k\}$, and let $G$ be a finite group such that $G=G'$. If $q\in\pi(G)$, then $G$ is generated by $w$-values of $p$-power order for primes $p\neq q$. 
\end{lem}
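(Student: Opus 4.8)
The plan is to reduce to the key case where $G$ is a finite simple group and then exploit the structure given by Thompson's classification via Proposition \ref{msg}. First I would observe that the statement "$G$ is generated by $w$-values of $p$-power order for primes $p \neq q$" passes well to quotients and can be handled by induction on $|G|$, so I would like to argue as follows: let $N$ be a minimal normal subgroup of $G$. If $N < G$, then $G/N = (G/N)'$, and by induction $G/N$ is generated by images of $w$-values of $p$-power order ($p \neq q$); lifting these and combining with a handle on $N$ itself should close the induction, provided $N$ is also generated by suitable $w$-values. Since $N$ is a direct product of isomorphic simple groups, Lemma \ref{directprod} lets me pass to a single simple factor (a simple group $S$ is a $w$-value-rich object because $w(S) = S$, as $S$ is perfect and nonabelian). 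Thus the crux is: \emph{every finite nonabelian simple group $S$, for each prime $q \in \pi(S)$, is generated by $w$-values of $p$-power order with $p \neq q$.}

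For the simple case I would split according to whether $S$ has a soluble proper subgroup or not. If $S$ is \emph{not} minimal simple, it contains a proper nonsoluble subgroup $M$; passing to a minimal such $M$ and using induction on order (together with the fact that $w$-values in $M$ are $w$-values in $S$), one is reduced to $M$, eventually landing on a minimal simple group. So the genuinely new content is the case $S$ minimal simple. Here I invoke Proposition \ref{msg}: $S$ contains $H = A \rtimes T$ with $A$ an elementary abelian $2$-group, $T$ of odd order, $C_A(T) = 1$ and $A = [A,T]$. By Lemma \ref{Gor} (applied with the $2$-group $A$ normalized by $p'$-elements of $T$, i.e., elements of $T$ which automatically have $2'$-order), $[A,t] = [A, \,_{k-1} t]$ for every $t \in T$, so $[A,t]$ is generated by $w$-values — and these $w$-values lie in $A$, hence have $2$-power order. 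Taking $q \neq 2$ this already produces nontrivial $w$-values of the right kind inside $A = \langle [A,t] : t \in T \rangle$.

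The remaining work is to get enough such $w$-values to generate all of $S$, and to also cover the prime $q = 2$. For $q \neq 2$: the normal closure in $S$ of the $w$-values inside $A$ is a nontrivial normal subgroup of the simple group $S$, hence equals $S$; but one must be careful that "generated by $w$-values of $p$-power order" is what is claimed, not "normally generated" — so I would instead argue that the subgroup generated by \emph{all} $w$-values of $p$-power order ($p \neq q$) is normal in $G$ (being a union of orbits under the inner automorphisms, since conjugates of $w$-values are $w$-values of the same order) and nontrivial, hence all of $S$. For $q = 2$, I need $w$-values of odd prime-power order: Lemma \ref{rembearsuz} gives, for an involution $x \in S$, an element $g$ with $[x,g]$ of odd prime order; one then massages $[x,g]$ (or rather shows the subgroup generated by odd-order $w$-values is normal and nontrivial, using that $S$ is perfect so $w(S) = S$ is nontrivial and contains elements of odd order by the classification of the relevant simple groups). \textbf{The main obstacle} I anticipate is precisely this last point: producing $w$-\emph{values} (not arbitrary elements of $w(S)$) of prescribed $p$-power order in sufficient supply to generate, rather than merely normally generate, the simple group; handling $q=2$ via Lemma \ref{rembearsuz} and $q$ odd via the $2$-group $A$ and Lemma \ref{Gor} is the natural dichotomy, but threading "generated, not normally generated" through both cases — using normality of the subgroup generated by the relevant $w$-values together with simplicity of $S$ — is where the argument must be made carefully.
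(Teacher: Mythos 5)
Your route is genuinely different from the paper's, and it has gaps that I do not think can be closed as sketched. The most serious one is the reduction to simple groups. A perfect finite group need not be built from simple groups in a way your induction can use: it may have a nontrivial soluble radical (think of a perfect central extension such as $\mathrm{SL}(2,5)$, with $q$ an odd prime dividing $|G|$ and $N=Z(G)$ of order $2$). In the inductive step, a $w$-value of $p$-power order in $G/N$ lifts to a $w$-value of $G$ whose order is only a $p$-power \emph{modulo} $N$, not a $p$-power; and you give no mechanism for generating $N$ itself by $w$-values of order coprime to $q$ (if $N$ is a central $q$-group this is exactly the hard point). So "lifting these and combining with a handle on $N$" does not close the induction. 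A second gap, which you flag yourself but do not resolve, is the case $q=2$ for a simple group: Lemma \ref{rembearsuz} produces an element $[x,g]$ of odd prime order, but $[x,g]$ is a plain commutator, not a value of a general simple commutator word $w=[x_{i_1},\ldots,x_{i_k}]$ with $k\ge 3$, so it does not feed into "the subgroup generated by $w$-values of odd order is normal and nontrivial". Finally, the reduction from an arbitrary simple group to a minimal simple one via a minimal nonsoluble proper subgroup $M$ only produces $w$-values inside $M$, and $M$ is neither normal nor necessarily simple, so even granting the minimal simple case you have not produced the required $w$-values in $S$.

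The paper avoids all of this with a much shorter argument that never leaves the perfect group $G$: for each prime $p\neq q$ let $N_p$ be the (normal) subgroup generated by all $w$-values of $p$-power order. If some Sylow $p$-subgroup were not contained in $N_p$, one passes to $G/N_p$; since $G=G'$ there is no normal $p$-complement, so the Frobenius normal $p$-complement theorem yields a $p$-subgroup $H$ and a $p'$-element $a$ normalizing it with $[H,a]\neq 1$, and Lemma \ref{Gor} rewrites $[H,a]=[H,\,_{k-1}a]$, a nontrivial subgroup generated by $w$-values of $p$-power order --- a contradiction. Hence each $N_p$ contains the Sylow $p$-subgroups, the product $T$ of the $N_p$ over $p\neq q$ has $q$-group quotient $G/T$, and $G=G'$ forces $G=T$. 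I would suggest abandoning the reduction to simple groups and looking for a normal-$p$-complement-type criterion that applies directly to $G$.
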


\begin{proof} For each prime $p\in\pi(G)\setminus\{q\}$, denote by $N_p$ the subgroup of $G$ generated by all $w$-values of $p$-power order. Let us show that each Sylow $p$-subgroup of $G$ is contained in $N_p$. Suppose this is false and choose $p$ such that a Sylow $p$-subgroup of $G$ is not contained in $N_p$. Of course, $N_p$ is a normal subgroup of $G$. We may pass to the quotient $G/N_p$ and assume that $N_p=1$. Since $G=G'$, it is clear that $G$ does not possess a normal $p$-complement. Thus the Frobenius Theorem (see \cite[Theorem 7.4.5]{go}) implies  that $G$ has a $p$-subgroup $H$ and a $p'$-element $a\in N_G(H)$ such that $[H,a]\neq1$. By Lemma \ref{Gor}, we have
\[1\neq[H,a]=[H,\,_{k-1}a]\leq N_p,\] a contradiction. Hence $N_p$ contains the Sylow $p$-subgroups of $G$. Let $T$ be the product of all subgroups $N_p$, with $p\neq q$. Then $G/T$ is a $q$-group and, since $G=G'$, we conclude that $G=T$. It follows that $G$ can be generated by $w$-values of $p$-power order for $p \neq q$. 
\end{proof}

In order to complete the proof of Theorem \ref{main}, we recall that if a simple commutator word is a law in a finite group $G$, then $G$ is nilpotent \cite{hup}. 

\begin{pro}\label{non-solu} 
Let $w=[x_{i_1}, \ldots, x_{i_k}]$ be a simple commutator word with $i_{1} \neq i_j$ for every $j \in \{2, \ldots, k\}$, and let $G$ be a finite group in which  $|ab|=|a||b|$ for any $a,b\in G_w$ of coprime orders. Then $G$ is soluble, and $w(G)$ is nilpotent.
\end{pro}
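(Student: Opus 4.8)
The plan is to prove Proposition~\ref{non-solu} by showing that the coprime order hypothesis forces $G$ to be soluble, after which Proposition~\ref{solu} gives that $w(G)$ is nilpotent. I would argue by contradiction: suppose $G$ is a counterexample of minimal order, so $G$ is not soluble. The first reduction is to locate a minimal simple section. Since $G$ is insoluble, it has a section $S$ which is a minimal simple group; I would like to reduce to the case where $G$ itself is a minimal simple group, but more carefully one should work with a subgroup and quotient. The cleaner route: let $R$ be the soluble radical of $G$; then $G/R$ has a minimal normal subgroup which is a direct product of copies of a nonabelian simple group whose proper subgroups need not all be soluble, so instead one picks, inside $G$, a subgroup that maps onto a minimal simple group. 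The key point to exploit is Proposition~\ref{msg}: a finite minimal simple group $L$ contains $H = A \rtimes T$ with $A$ elementary abelian of order a power of $2$, $T$ of odd order, $C_A(T) = 1$ and $A = [A,T]$.

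The heart of the argument is to combine Lemma~\ref{generators} with Lemma~\ref{bbb} to reach a contradiction with insolubility. Here is the mechanism I would carry out. Working inside a suitable section that is a minimal simple group $L$ (equivalently, one reduces to $G = L$ after passing to subgroups/quotients and checking the coprime hypothesis is inherited—this inheritance is the delicate bookkeeping), apply Lemma~\ref{generators} with $q = 2$: since $L = L'$, the group $L$ is generated by $w$-values of odd prime power order. Now take the subgroup $H = A \rtimes T$ from Proposition~\ref{msg}. The element of $T$ of odd order (or rather, $w$-values lying in the relevant part) normalizes the $2$-group $A$; since $A = [A,T]$ and $T$ has odd order, $[A,T] \neq 1$. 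If I can show that the action of $T$ on $A$ is realized, at the level of generators, by $w$-values of $p'$-order normalizing a $2$-subgroup, then Lemma~\ref{bbb} forces that commutator to be trivial, contradicting $A = [A,T] \neq 1$. The subtlety is that $T$ itself need not consist of $w$-values, so one must instead use that $L$ (hence a conjugate of $H$, or $H$ after adjusting) is generated by $w$-values of odd order, and that $A$ being a $2$-group not centralized by all of these generators yields some $w$-value $x$ of $p'$-order (for $p$ odd, so $x$ is a $2'$-element) with $[A_0, x] \neq 1$ for some $2$-subgroup $A_0$—contradicting Lemma~\ref{bbb}.

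For the other half of the contradiction—showing $G$ has no simple section at all with a $2$-element behaving badly—I would bring in Lemma~\ref{rembearsuz}: in a finite nonabelian simple group, an involution $x$ has a conjugate-type commutator $[x,g]$ of odd prime order. This pins down why the prime $2$ is the obstruction and why minimal simple groups are the right class: any insoluble finite group has a minimal simple section, and Thompson's list plus Proposition~\ref{msg} gives the explicit $A \rtimes T$ to feed into Lemma~\ref{bbb}. Concretely: if $G$ is insoluble, pick a section isomorphic to a minimal simple group $L$; realize $H = A \rtimes T \le L$ as in Proposition~\ref{msg}; lift generators of $L$ of odd prime power order (Lemma~\ref{generators} with $q=2$) and observe that one of them, call it $x$, is a $2'$-element not centralizing $A$ (since the whole collection generates $L$ and $C_L(A) \ne L$). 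Applying Lemma~\ref{bbb} to the $2$-group $A$ normalized by such an $x$—after arranging the normalization, e.g. by replacing $A$ with $A \cap A^x\cdots$ or by choosing $x \in T$-conjugates, or simply noting $N_L(A) \ge H$ contains $w$-values of odd order not centralizing $A$ because $A=[A,T]$—yields $[A,x]=1$ for all such $x$, whence $[A,L]=1$, contradicting $A = [A,T] \ne 1$ and the simplicity of $L$. Therefore $G$ is soluble, and Proposition~\ref{solu} finishes the proof.

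The main obstacle I anticipate is the passage from the hypothesis on $G$ to the same hypothesis on a minimal simple section $L$: the coprime-order condition on products of $w$-values is not obviously inherited by subgroups and is definitely not inherited by arbitrary quotients. One must argue that $w$-values in $L$ can be chosen to lift to $w$-values in $G$ (true, since $w$-values lift along quotient maps) and that orders behave compatibly—typically by localizing at the relevant primes and using that one only needs the condition for the specific pairs of $w$-values produced by Lemmas~\ref{generators} and~\ref{bbb}, not for all pairs. So rather than literally reducing to $G=L$, the right phrasing is to run the contradiction directly inside $G$: extract a $2$-subgroup and a $w$-value of odd order in $G$ witnessing $[A,T]\ne 1$ in the section, pull both back to $G$, and apply Lemma~\ref{bbb} there. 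Getting that pullback clean—especially ensuring the pulled-back $2'$-element is a genuine $w$-value of $G$ normalizing a genuine $2$-subgroup of $G$—is the technical crux.
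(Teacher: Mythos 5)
There is a genuine gap at the heart of your argument: the step where you apply Lemma~\ref{bbb} to the $2$-group $A$ and a $w$-value $x$ of odd order. Lemma~\ref{bbb} requires $x$ to \emph{normalize} the $p$-subgroup, and none of your proposed fixes supplies such an $x$. The odd-order $w$-values produced by Lemma~\ref{generators} with $q=2$ generate $L$ but have no reason to normalize $A$; indeed in a simple group they cannot all do so, so the conclusion ``$[A,x]=1$ for all such $x$, whence $[A,L]=1$'' is not reachable this way. Your fallback --- that $N_L(A)\geq H$ contains odd-order $w$-values not centralizing $A$ ``because $A=[A,T]$'' --- is unsubstantiated: the $w$-values that $A=[A,T]$ actually produces are the elements $[a,\,_{k-1}t]$, which lie in $A$ and are therefore $2$-elements, not $2'$-elements; the elements of $T$ themselves need not be $w$-values at all. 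You have the right ingredients but are feeding them into Lemma~\ref{bbb} the wrong way round.

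The paper's proof inverts the roles of the two primes. Lemma~\ref{generators} and Lemma~\ref{bbb} are used only to show that $F(G)=Z(G)$ and hence that the soluble radical $R$ equals $Z(G)$ (each Sylow subgroup $Q$ of $F(G)$ is centralized by a generating set of $w$-values of $q'$-order, and these \emph{do} normalize $Q$ since $Q$ is normal). This makes $G/Z(G)$ a minimal simple group and --- crucially --- disposes of the inheritance problem you rightly flag, since no passage to a proper section is needed: Proposition~\ref{msg} together with $A=[A,\,_{k-1}T]$ yields an honest $w$-value $x=[a,\,_{k-1}t]$ of $G$ that is a $2$-element of order $2$ modulo $Z(G)$. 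Then Lemma~\ref{rembearsuz} (Baer--Suzuki) produces $g$ with $[x,g]$ of odd prime order; since $x^2\in Z(G)$, the identity $1=[x^2,g]=[x,g]^x[x,g]$ shows $x$ inverts, hence normalizes, the odd-order subgroup $\langle[x,g]\rangle$, and Lemma~\ref{bbb} --- now applied with the $2$-element $w$-value acting on an odd-order subgroup --- forces $[x,g]=1$, a contradiction. In your sketch Lemma~\ref{rembearsuz} is mentioned only as ``the other half of the contradiction'' without being executed, but it is in fact the engine of the whole argument; without it, and without first pinning $R$ down to $Z(G)$, the proof does not close.
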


\begin{proof} By Proposition \ref{solu}, it is enough to show that $G$ is soluble. Suppose that $G$ is not soluble. Of course, we may assume that $G$ is a counterexample of minimal order. Then every proper subgroup $K$ of $G$ is soluble: indeed, $K/w(K)$ is nilpotent \cite[6.1 Satz]{hup} and so is $w(K)$ by Proposition \ref{solu}. It follows that $G=G'$. 

Let $R$ be the soluble radical of $G$, that is, the subgroup of $G$ generated by all normal soluble subgroups of $G$. Then $G/R$ is a nonabelian simple group and, by Proposition~\ref{solu}, $w(R)$ is nilpotent. We claim that $R=Z(G)$. Choose $q\in\pi(F(G))$ and let $Q$ be the Sylow $q$-subgroup of $F(G)$. According to Lemma \ref{generators}, the group $G$ is generated by $w$-values of $p$-power order for primes $p\neq q$. Also, by Lemma \ref{bbb},  $[Q, x]=1$ for every $w$-value $x$ of $q'$-order. Thus $Q\leq Z(G)$. This happens for each choice of $q\in\pi(F(G))$, so that $F(G)= Z(G)$.  Since $w(R)\leq F(G)$, we have $[x_{i_1}, \ldots, x_{i_k}, y]=1$ for every $x_{i_1}, \ldots, x_{i_k}, y \in R$. Hence $R$ is nilpotent \cite[6.1 Satz]{hup} and therefore $R\leq F(G)$. 
In particular, $R=Z(G)$.   

Next, we prove that $G$ contains a $w$-value $x$ such that $x$ is a 2-element of order $2$ modulo $Z(G)$. First, notice that $G/Z(G)$ is a finite minimal simple group. Then, by Proposition~\ref{msg}, $G/Z(G)$ has a subgroup \[H/Z(G) = A/Z(G) \rtimes T/Z(G)\] where $A/Z(G)$ is an elementary abelian $2$-group and $T/Z(G)$ is a group of odd order such that \[ A/Z(G) = [A/Z(G), \, _{k-1} T/Z(G)].\] Let $P$ be the Sylow $2$-subgroup of $A$. Thus $x=[a, \, _{k-1} t]$,  for some $a \in P$ and $t \in T$, is a $w$-value as desired.

Now, take $x\in G_w$ with the above properties. By Lemma \ref{rembearsuz}, there exists an element $g\in G$ such that the order of $[x,g]$ is an odd prime. Since \[1=[x^2,g]=[x,g]^x[x,g],\] $x$ inverts $[x,g]$ and so, by Lemma \ref{bbb}, $[\langle [x,g]\rangle , x]=1$. This gives $[x,g]=1$, which is a contradiction.
\end{proof}

\section{Proof of Corollary \ref{res}}\label{procor}

Following \cite{dms}, we say that a word $w$ implies virtual nilpotency if every finitely generated metabelian group, where $w$ is a law, has a nilpotent subgroup of finite index. Since finitely generated $n$-Engel groups are nilpotent (see \cite[Part 2, Theorem 7.3.5]{rob2}), the Engel words imply virtual nilpotency. More generally, this is true for simple commutator words. 

\begin{lem}\label{simple}
Let $w=[x_{i_1}, \ldots, x_{i_n}]$ be a simple commutator word and let $G$ be a metabelian group such that $w(G)=1$. Then $G$ is $n$-Engel.
\end{lem}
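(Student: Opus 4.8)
The plan is to show that in a metabelian group $G$ where a simple commutator word $w=[x_{i_1},\ldots,x_{i_n}]$ vanishes, the Engel word $[x,\,_{n-1}y]$ (or $[x,\,_{n}y]$, after checking the exact length bookkeeping) is also a law. The natural approach is a \emph{substitution argument}: given $x,y\in G$, we want to feed elements built from $x$ and $y$ into the $n$ variable slots of $w$ so that the resulting $w$-value equals a left-normed commutator $[x,\,_{n-1}y]$. Since $i_1\neq i_2$ we may freely assign the first two slots; the remaining slots $x_{i_3},\ldots,x_{i_n}$ must each be set to something, but some of the indices $i_3,\ldots,i_n$ may coincide with $i_1$ or $i_2$ or with each other, so we cannot simply set ``all later variables equal to $y$'' without first understanding what constraints the repetitions impose.

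The key observation I would exploit is that $G$ is metabelian, so $G'$ is abelian, and therefore for any $u\in G'$ and any $a,b\in G$ one has the familiar identities $[u,a,b]=[u,b,a]$ and more generally commutators of weight $\geq 2$ that lie in $G'$ behave ``linearly'' and symmetrically in their later entries. Concretely, after the first bracket $[x_{i_1},x_{i_2}]$ the resulting element lies in $G'$, hence in an abelian normal subgroup, and all subsequent commutation is an action of $G$ on this abelian group; in such a situation $[v,a_1,\ldots,a_{n-2}]$ depends on the $a_j$ only through the subgroup they generate together with the relevant actions, and in particular we may reorder the $a_j$ and, crucially, we may replace a repeated variable's multiple occurrences by a single occurrence of a power. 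So I would first substitute $x_{i_1}\mapsto x$, $x_{i_2}\mapsto y$, and every other variable $x_{i_j}\mapsto y$ for $j\geq 3$; using the metabelian identities this collapses $w(x,y,\ldots,y)$ to $[x,y,\,\underset{}{y},\ldots,y]=[x,\,_{n-1}y]$ up to the precise count of $y$'s, showing $[x,\,_{n-1}y]=1$ for all $x,y\in G$. (If instead one wants to be careful about whether $i_1$ reappears — it does not, by the hypothesis $i_1\neq i_j$ for $j\geq 2$, which is presumably why the lemma is stated for \emph{simple} commutator words without further restriction here, but note the full Theorem's extra hypothesis is not assumed in this lemma — I should double-check whether conciseness of the argument needs $i_1$ to be unrepeated; it seems not, since $x_{i_1}$ occurs exactly once in any simple commutator word by definition of left-normed bracketing.)

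I expect the main obstacle to be the \textbf{bookkeeping of repeated indices and commutator length}: after the substitution, repetitions among $i_2,\ldots,i_n$ mean that the naive image is $[x,y^{?},\ldots]$ with exponents, and one must verify using the metabelian relations (and the fact that in an abelian group on which $\langle y\rangle$ acts, $[v,y^m]$ and $[v,\,_{m}y]$ generate the same cyclic subgroup under the action, or more simply that $[v,y,\ldots,y]$ with $k$ copies vanishing for all $v$ forces the $(k{+}1)$-fold one to vanish) that the vanishing of all $w$-values really does force vanishing of $[x,\,_{n-1}y]$, possibly after increasing the Engel index by a bounded amount — which is harmless for the conclusion ``$G$ is $n$-Engel'' if one is slightly generous, or requires a short extra argument to pin down exactly $n$. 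The cleanest route is: reduce to the case where $G=\langle x\rangle\ltimes M$ with $M$ abelian, observe $[x,\,_{n-1}y]\in M$, expand $w$ evaluated on suitable elements using the $\mathbb{Z}[\langle y\rangle]$-module structure of $M$, and conclude. This is routine module arithmetic once set up, so the write-up should be short.
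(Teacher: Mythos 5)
Your proposal has a genuine gap, and it is located exactly at the point you flagged and then waved away: you assert that ``$x_{i_1}$ occurs exactly once in any simple commutator word by definition,'' but that is false for the definition used in this paper. A simple commutator word only requires $i_1\neq i_2$; the index $i_1$ may perfectly well reappear among $i_3,\ldots,i_n$ (for instance $w=[x_1,x_2,x_1]$ is a simple commutator word), and Lemma \ref{simple} deliberately does \emph{not} assume the extra hypothesis $i_1\neq i_j$ for all $j\geq 2$ that appears in Theorem \ref{main} --- the lemma must cover arbitrary simple commutator words, since that is what Corollaries \ref{Engel} and \ref{simplebound} need. When $i_1$ does reappear, your substitution $x_{i_1}\mapsto x$, $x_{i_j}\mapsto y$ for $j\geq 3$ is not even well defined (the single variable $x_{i_1}$ would have to be sent to both $x$ and $y$), and the consistent version of it yields, after using the metabelian symmetry, a value of the form $[x,y,\,_a y,\,_b x]$ with $b\geq 1$, whose vanishing is not obviously an Engel law. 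Your fallback sketch (module arithmetic over $\mathbb{Z}[\langle y\rangle]$) does not repair this, because the obstruction is precisely that $x$, and not only $y$, acts in the trailing slots.

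The paper's proof resolves exactly this point with one device your proposal is missing. First, as you anticipated, the identity $[c,a,b]=[c,b,a]$ for $c\in G'$ is used to normalize $w$ so that all later occurrences of $x_{i_1}$ are pushed to the end: $w=[x_{i_1},\ldots,x_{i_m},\,_{n-m}x_{i_1}]$ with $i_1\neq i_j$ for $2\leq j\leq m$. Then, instead of $x_{i_1}\mapsto x$, one substitutes $x_{i_1}\mapsto y[x,y]$ and $x_{i_2}=\cdots=x_{i_m}=y$. Since $[y[x,y],y]=[x,y,y]$, and since for $c\in G'$ one has $[c,y[x,y]]=[c,[x,y]][c,y]^{[x,y]}=[c,y]$ in a metabelian group, the trailing copies of $y[x,y]$ act exactly like $y$ and the whole expression collapses to $[x,\,_{n}y]=1$. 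This also settles your bookkeeping worry: the Engel length comes out to exactly $n$. Without this (or an equivalent) substitution, the proposal does not prove the lemma as stated. (A secondary caution: your suggestion to replace repeated occurrences of $y$ by a single occurrence of a power is also unsound, since $[v,y,y]\neq[v,y^2]$ even for $v$ in an abelian normal subgroup.)
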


\begin{proof}
Since $G$ is metabelian, we have $[c,x_{i_j},x_{i_k}]=[c,x_{i_k},x_{i_j}]$ for every $c\in G'$. Then, without loss of generality, we may assume that 
\[w=[x_{i_1}, \ldots, x_{i_m},\,_{n-m} x_{i_1}]\] 
where $1<m<n$ and $i_1\neq i_j$ for every $j\in \{2,\dots,m\}$. Thus, for any $x,y \in G$, taking $x_{i_1}=y[x,y]$ and $x_{i_2}=\dots=x_{i_m}=y$, we have $[x, \,_{n} y]=1$
and therefore $G$ is $n$-Engel.
\end{proof}

\begin{cor}\label{Engel}
Every simple commutator word implies virtual nilpotency.
\end{cor}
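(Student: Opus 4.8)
The plan is to obtain Corollary \ref{Engel} as an immediate consequence of Lemma \ref{simple} together with the classical fact, recalled in the paragraph preceding the corollary, that finitely generated (soluble) $n$-Engel groups are nilpotent \cite[Part 2, Theorem 7.3.5]{rob2}. So let $w=[x_{i_1},\ldots,x_{i_n}]$ be an arbitrary simple commutator word, of length $n\geq 2$ with $i_1\neq i_2$, and let $G$ be a finitely generated metabelian group in which $w$ is a law, that is $w(G)=1$.

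First I would apply Lemma \ref{simple} to conclude that $G$ is $n$-Engel; note that Lemma \ref{simple} is stated for \emph{every} simple commutator word, with no restriction on repetitions of the first variable, so it applies here with no extra hypothesis. Next, since $G$ is finitely generated and metabelian, hence soluble, the cited theorem yields that $G$ is nilpotent. In particular $G$ itself is a nilpotent subgroup of finite index in $G$, so by definition $w$ implies virtual nilpotency. As $w$ was an arbitrary simple commutator word, this proves the corollary.

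I do not expect any genuine obstacle here: the substantive work has already been carried out in Lemma \ref{simple}, and what remains is a one-line deduction. The only points worth a moment's care are bookkeeping ones, namely that Lemma \ref{simple} indeed covers the whole class of simple commutator words, and that the passage ``metabelian $\Rightarrow$ soluble'' is exactly what permits invoking the Engel-to-nilpotent statement for finitely generated groups (which, in that generality, is the one relevant to us).
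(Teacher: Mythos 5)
Your argument is correct and is exactly the paper's intended deduction: the corollary is stated without a separate proof precisely because it follows at once from Lemma \ref{simple} together with the fact, recalled just before that lemma, that finitely generated (soluble) $n$-Engel groups are nilpotent. Your parenthetical care about ``soluble'' is well placed, since the metabelian hypothesis is what licenses the appeal to \cite[Part 2, Theorem 7.3.5]{rob2}.
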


A word $w$ is said to be boundedly concise in a class of groups $\mathcal{C}$ if for every integer $m$ there exists a number $\nu = \nu(\mathcal{C},w,m)$ such that whenever $|G_w| \leq m$ for a group $G \in \mathcal{C}$ it always follows that $|w(G)| \leq \nu$. According to Theorem 1.2 of \cite{dms}, words implying virtual nilpotency are boundedly concise in residually finite groups. This, together with Corollary \ref{Engel}, yields the following corollary.

\begin{cor}\label{simplebound}
Every simple commutator word is boundedly concise in the class of residually finite groups.
\end{cor}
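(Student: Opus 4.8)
The plan is to obtain the statement as an immediate consequence of Corollary \ref{Engel} together with the conciseness result of Detomi--Morigi--Shumyatsky (Theorem 1.2 of \cite{dms}). First I would recall what has to be proved: bounded conciseness in a class $\mathcal{C}$ requires, for every integer $m$, a number $\nu=\nu(\mathcal{C},w,m)$ bounding $|w(G)|$ whenever $G\in\mathcal{C}$ satisfies $|G_w|\le m$. So the task reduces to producing such a $\nu$ when $G$ is residually finite.

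The key input is exactly Theorem 1.2 of \cite{dms}, which asserts that any word implying virtual nilpotency is boundedly concise in the class of residually finite groups. Hence it is enough to check that an arbitrary simple commutator word $w=[x_{i_1},\ldots,x_{i_n}]$ implies virtual nilpotency, i.e.\ that every finitely generated metabelian group in which $w$ is a law has a nilpotent subgroup of finite index. This is precisely the content of Corollary \ref{Engel}, which rests on Lemma \ref{simple}: in a metabelian group where $w$ is a law one exploits the identity $[c,x_{i_j},x_{i_k}]=[c,x_{i_k},x_{i_j}]$ for $c\in G'$ to rewrite $w$ as an Engel word $[x,\,_{n} y]$, and then finitely generated $n$-Engel groups are nilpotent.

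Putting these together, the proof is a one-line deduction: a simple commutator word $w$ implies virtual nilpotency by Corollary \ref{Engel}, and therefore Theorem 1.2 of \cite{dms} furnishes the desired bound $\nu$, so $w$ is boundedly concise in the class of residually finite groups. I do not expect any genuine obstacle here; the whole substance has already been isolated in Corollary \ref{Engel} and in the cited external theorem, and the only point to verify is that the hypothesis ``implies virtual nilpotency'' of that theorem is met by all simple commutator words, which Corollary \ref{Engel} supplies.
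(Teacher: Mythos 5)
Your proposal is correct and follows exactly the paper's route: the corollary is deduced by combining Corollary \ref{Engel} (every simple commutator word implies virtual nilpotency, via Lemma \ref{simple}) with Theorem 1.2 of \cite{dms}, which is precisely how the paper obtains it. No gaps.
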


We are finally ready to prove Corollary \ref{residual}: {\em Let $w = [x_{i_1},\ldots, x_{i_k}]$ be a simple commutator word with $i_{1} \neq i_j$ for every $j \in \{2, \ldots, k\}$, and let $G$ be a residually finite group in which $G_w$ is finite. Then $w(G)$ is finite, and it is nilpotent if and only if $|ab| = |a||b|$ for any $a, b \in G_w$ of coprime orders.}

\begin{proof}[{\bf Proof of Corollary \ref{res}}]
Of course $w(G)$ is finite, by Corollary \ref{Engel}. Let us show that $w(G)$ is nilpotent, whenever $|ab| = |a||b|$ for any $a, b \in G_w$ of coprime orders. The converse is clear. 

Since $G$ is residually finite, there exists a normal subgroup $N$ of $G$ such that $N \cap w(G)=1$ and $G/N$ is finite. Notice that for any $w$-value $xN\in G/N$, we have $x\in G_w$ and $|xN|=|x|$. It follows that $G/N$ satisfies the hypotheses of Theorem~\ref{main} and therefore $w(G/N)\simeq w(G)$ is nilpotent.    
\end{proof}

\section{Examples}\label{Examples}

In this section we collect some examples showing that Theorem \ref{main} does not hold for an arbitrary commutator word.

\begin{exa}
Let $w=[x,y]^3$ and let $G=(S \times S) \rtimes C$ where $S$ is the symmetric group of degree $3$, $C= \langle g \rangle$ is the cyclic group of order $2$, and the action is given by $(a,b)^g=(b,a)$ for every $(a,b) \in S \times S$. Then every nontrivial $w$-value has order $2$, and $w(G)=S \times S$.
\end{exa}

\begin{exa}
Let $w=[x,y^{10},y^{10},y^{10}]$ and let $G$ be the alternating group of degree $5$. Then $G_w$ consists of the identity and all products of two transpositions. In particular, $w(G)=G$. 
\end{exa}

\begin{proof}
Of course $G_w$ is the set of all commutators $[g,h,h,h]$, where $g,h \in G$ and $h$ is a 3-cycle. Since $[g,h,h]=(h^{-1})^{[g,h]}h$, we have
\[
[g,h,h,h]=[(h^{-1})^{[g,h]},h]^h= [(h^{-1})^{{(h^{-1})^g}h},h]^h=[(h^{-1})^k,h]^{h^{-1}},
\]
where $k=(h^{-1})^g$ is a $3$-cycle. For any $3$-cycles $h,k \in G$, we claim  that $[(h^{-1})^k,h]$ is either trivial or a product of two transpositions, from which it follows that so is $[(h^{-1})^k,h]^{h^{-1}}$.
	
Let $h=(a\ b\ c)$ and $k=(d\ e\ f)$. Clearly, we may assume $a=d$. Furthermore, it is enough to consider the cases:
\begin{enumerate}
\item[(1)] $b=e$ and $c\neq f$;
\item[(2)] $e,f \not\in\{b, c\}$.
\end{enumerate}
	
\noindent In fact, the other (nontrivial) cases can be deduced applying the following identities:
$$[(h^{-1})^k,h]^{-(h^{-1})^k h^{-1} k^{-1}}=[((h^{-1})^k)^{-1},h^{-1}]^{-k^{-1}}=[(h^{-1})^{k^{-1}},h];$$
$$[(h^{-1})^k,h]^{h^k k^{-1}}=[(h^k)^{-1},h]^{h^k
k^{-1}}=[h^k,h]^{-k^{-1}}=[h^{k^{-1}},h].$$
	
Now, in the first case, $h$ and $k$ belong to the alternating group of degree $4$ and therefore $[(h^{-1})^k,h]$ is the product of two transpositions. In the second case, we have
$$[(h^{-1})^k,h]=[(a\ c\ b)^{(a\ e\ f)},(a\ b\ c)]=[(b\ e\ c), (a\ b\ c)]=(a\ c)(b\ e).$$
This proves our claim. Also, it implies that $G_w$ contains all products of two transpositions.
\end{proof}

\begin{exa}[see Theorem 1.2 of \cite{kani}]\label{v10}
For every $n\geq 7$, there exists a commutator word $v$ such that, for $w=v^{10}$, the set of $w$-values of the alternating group of degree $n$ consists of the identity and all $3$-cycles.
\end{exa}

\end{document}